\title{Asymptotic Expansions for the multiple gamma functions of Barnes-Milnor type}
\author{Hanamichi Kawamura}
\address{Seifu High School, 12-16, Ishigatsuji\-cho, Tennoji\-ku, Osaka\-shi, Osaka\-fu 543-0031, Japan}
\email{multiplegamma@outlook.com}
\begin{document}
\newtheorem{Thm}{Theorem}[section]
\newtheorem{Def}[Thm]{Definition}
\newtheorem{Lem}[Thm]{Lemma}
\newtheorem{Prop}[Thm]{Proposition}
\begin{abstract}
The classical Stirling's formula gives the asymptotic behavior of the gamma function. Katayama and Ohtsuki generalized this formula for Barnes' multiple gamma functions. In this paper, we further generalize these formulas for the multiple gamma functions of BM (Barnes-Milnor) type. 
\end{abstract}
\maketitle
\section{Introduction}
The multiple gamma functions were introduced by Barnes \cite{barnes}. His idea is that the multiple Hurwitz zeta functions can be applied to Lerch's formula for defining the multiple gamma functions. After his discovery, many mathematicians have studied this function. Among them, Kurokawa-Ochiai \cite{ki} is remarkable in that they constructed a theory of the generalized gamma functions of BM (Barnes-Milnor) type. While, Katayama-Ohtsuki \cite{katayama1998} proved asymptotic expansions of the Barnes multiple gamma functions which are generalizations of Stirling's formula. Our purpose in this paper is discovering generalizations of Stirling's approximation for the BM multiple gamma functions.

Let $w,\omega_1,\omega_2,\cdots,\omega_r$ be complex numbers with positive real parts. We recall the definition of the multiple Hurwitz zeta functions $\zeta_r$ by
\begin{equation*}\displaystyle\zeta_r(s,w;{\boldsymbol{\omega}}):=\sum_{{\bf n}\geq{\bf 0}} ({\bf n}\cdot{\boldsymbol{\omega}}+w)^{-s}\end{equation*}
where ${\bf 0}=(0,\cdots,0)$, ${\bf n}=(n_1,\cdots,n_r)$, ${\boldsymbol{\omega}}=(\omega_1,\cdots,\omega_r)$, $\boldsymbol{n}\geq\boldsymbol{0}\overset{\mathrm{def}}{\Leftrightarrow}n_i\geq{0}\,(i=1,\cdots,r)$ and ${\bf n}\cdot{\boldsymbol{\omega}}=n_1\omega_1+\cdots+n_r\omega_r$. This series converges absolutely and uniformly if $\mathrm{Re}(s)>r$. The multiple Hurwitz zeta functions $\zeta_r$ are continued analytically to meromorphic functions in the whole complex plane and holomorphic except for simple poles at $s=1,\cdots,r$.\\

Put
\begin{eqnarray*}\displaystyle\Gamma_r(w;{\boldsymbol{\omega}}):=\exp\left(\left.\frac{\partial}{\partial s}\zeta_r(s,w;{\boldsymbol{\omega}})\right|_{s=0}\right).\end{eqnarray*}

This functions $\Gamma_r$ are meromorphic functions with simple poles at $w=-\boldsymbol{n}\cdot\boldsymbol{\omega}\,(\boldsymbol{n}\geq{\boldsymbol{0}})$ and have no zeros. The usual gamma function $\Gamma(w)$ can be written as $\Gamma(w)=\sqrt{2\pi}\Gamma_1(w,1)$.\\

As we mentioned before, we give generalizations of asymptotic expansions of the Barnes multiple gamma functions $\Gamma_r$ in this paper. The same method is also applicable to the generalized gamma functions defined by
\begin{eqnarray*}\displaystyle \Gamma_{r,k}(w;{\boldsymbol{\omega}})=\exp\left(\left.\frac{\partial}{\partial s}\zeta_r(s,w;{\boldsymbol{\omega}})\right|_{s=-k}\right)\qquad (k\in{\mathbb{Z}_{\geq{0}}}).\end{eqnarray*}
This function can generalize Kinkelin's formula and that was found in \cite{ki}. Moreover, we define the modified BM gamma functions in order to write our main theorem more simply:
\begin{eqnarray*}\displaystyle P_r(k,w;{\boldsymbol{\omega}})=\frac{1}{2\pi i}\int_{I(\lambda,\infty)} f_{\boldsymbol{\omega}}(t)e^{-wt}t^{-k-1}\log t\,dt+(\gamma-\pi i)a_{r,k}(w;{\boldsymbol{\omega}})\end{eqnarray*}
for $k\geq{-r}$. We check the definitions of some symbols in the next section.
\begin{Thm}
We have asymptotically for large $|w|$
\begin{equation*}\displaystyle P_{r+l}(k,w+a;({\boldsymbol{\omega}},{\boldsymbol{\alpha}}))=\sum_{N=-l}^{r+k} a_{l,N}(a;{\boldsymbol{\alpha}})P_{r}(k-N,w;{\boldsymbol{\omega}})+O\left(\frac{1}{w}\right).\end{equation*}
where 
\begin{eqnarray*}\displaystyle\frac{e^{-wt}}{\prod_{j=1}^r (1-e^{-\omega_jt})}=\sum_{n\geq{-r}} a_{r,n}(w;{\boldsymbol{\omega}})t^n\end{eqnarray*}
defines the multiple Bernoulli polynomials $a_{r,n}$.
\end{Thm}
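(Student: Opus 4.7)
My plan is to work directly from the integral representation of $P_{r+l}$ and reduce it to a sum of $P_r$'s by expanding the extra factor that depends on $\boldsymbol{\alpha}$. The integrand splits as
\begin{equation*}
\frac{e^{-(w+a)t}}{\prod_{j=1}^{r}(1-e^{-\omega_j t})\prod_{j=1}^{l}(1-e^{-\alpha_j t})}=\frac{e^{-wt}}{\prod_{j=1}^{r}(1-e^{-\omega_j t})}\cdot\frac{e^{-at}}{\prod_{j=1}^{l}(1-e^{-\alpha_j t})},
\end{equation*}
and by the very definition of the multiple Bernoulli polynomials the second factor has Laurent expansion $\sum_{n\geq-l}a_{l,n}(a;\boldsymbol{\alpha})t^n$ about $t=0$. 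I truncate at $n=r+k$, writing
\begin{equation*}
\frac{e^{-at}}{\prod_{j=1}^{l}(1-e^{-\alpha_j t})}=\sum_{N=-l}^{r+k}a_{l,N}(a;\boldsymbol{\alpha})\,t^N+R(t),\qquad R(t)=O(t^{r+k+1})\text{ near }0.
\end{equation*}

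Plugging this decomposition into the contour-integral part of $P_{r+l}(k,w+a;(\boldsymbol{\omega},\boldsymbol{\alpha}))$ and distributing the finite sum through the integral, each polynomial term produces
\begin{equation*}
a_{l,N}(a;\boldsymbol{\alpha})\cdot\frac{1}{2\pi i}\int_{I(\lambda,\infty)}\frac{e^{-wt}\,t^{(N-k)-1}\log t}{\prod_{j=1}^{r}(1-e^{-\omega_j t})}\,dt=a_{l,N}(a;\boldsymbol{\alpha})\bigl[P_{r}(k-N,w;\boldsymbol{\omega})-(\gamma-\pi i)\,a_{r,k-N}(w;\boldsymbol{\omega})\bigr]
\end{equation*}
by the definition of $P_r$. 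Summing over $N$ and adding the constant $(\gamma-\pi i)\,a_{r+l,k}(w+a;(\boldsymbol{\omega},\boldsymbol{\alpha}))$ coming from the definition of $P_{r+l}$, the $(\gamma-\pi i)$ pieces cancel exactly, because comparing coefficients in the two factorisations of the generating function gives the convolution identity
\begin{equation*}
a_{r+l,k}(w+a;(\boldsymbol{\omega},\boldsymbol{\alpha}))=\sum_{N=-l}^{r+k}a_{l,N}(a;\boldsymbol{\alpha})\,a_{r,k-N}(w;\boldsymbol{\omega}).
\end{equation*}
What is left is precisely the announced main sum, together with an error integral built from $R(t)$.

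The main obstacle, and the one genuinely analytic step, is the estimate
\begin{equation*}
\frac{1}{2\pi i}\int_{I(\lambda,\infty)}\frac{R(t)\,e^{-wt}\,t^{-k-1}\log t}{\prod_{j=1}^{r}(1-e^{-\omega_j t})}\,dt=O\!\left(\frac{1}{w}\right).
\end{equation*}
The choice of truncation at $N=r+k$ was made exactly so that the integrand is well behaved at the origin: since $\prod_{j=1}^{r}(1-e^{-\omega_j t})^{-1}=O(t^{-r})$ while $R(t)\,t^{-k-1}=O(t^{r})$, the quotient is bounded near $0$, so the contour $I(\lambda,\infty)$ can be pushed in and a Watson-lemma type argument on $\int e^{-wt}(\text{bounded analytic function})\log t\,dt$ supplies the required decay in $w$; the constant $\gamma-\pi i$ sitting in the definition of $P_r$ is tuned precisely so that the stray logarithmic factors from the branch cut do not spoil the $O(1/w)$ bound. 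I would expect a technical decay lemma of exactly this shape to appear earlier in the paper (in the spirit of the Katayama--Ohtsuki estimates), and the proof should end by quoting it verbatim.
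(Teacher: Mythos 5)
Your proposal follows essentially the same route as the paper's proof of part (3) of Theorem \ref{main}: the same factorisation of the integrand, the same truncation of the Laurent expansion of $f_{\boldsymbol{\alpha}}(t)e^{-at}$ at $N=r+k$, the cancellation of the $(\gamma-\pi i)$ terms via the convolution identity of Proposition 2.1 (4), and the reduction of the remainder to an integral of a function regular at $t=0$ that is $O(1/w)$. The only cosmetic difference is that the paper does not isolate the final decay estimate as a separate lemma but simply asserts it from the series expansion of the remainder term, which is the same Watson-lemma-type argument you describe.
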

We give a proof of the main theorem in (3) of Theorem \ref{main}. Some relations of the multiple Bernoulli polynomials $a_{r,n}(w,{\boldsymbol{\omega}})$ given by convolution plays an essential role in the above theorem. We give some properties of $a_{r,n}(w,{\boldsymbol{\omega}})$ including these relations in the next section.\\

In the case of $k=0$ in Theorem 1.1, we can get generalized Stirling's formulas: 
\begin{eqnarray*}\displaystyle\log\Gamma_{r+l}(w+a;({\boldsymbol{\omega}},{\boldsymbol{\alpha}}))=\sum_{N=-l}^{r} a_{l,N}(a;{\boldsymbol{\alpha}})P_{r}(-N,w;{\boldsymbol{\omega}})+O\left(\frac{1}{w}\right).\end{eqnarray*}

In particular, the case of $r=0, l=1, \omega_1=1$ is
\begin{eqnarray*}\displaystyle \log\Gamma_1(w+a;1)=\sum_{N=-1}^0 a_{1,N}(a;1)P_0(-N,w;\emptyset)+O\left(\frac{1}{w}\right).\end{eqnarray*}

Here, by using a well-known fact about the Laurent expansion of $\Gamma(s)$ it follows:
\begin{eqnarray*}\displaystyle a_{1,N}(w;1)&=&(-1)^{N+1}\frac{B_{N+1}(w)}{(N+1)!},\\P_0(n,w;\emptyset)&=&\frac{(-1)^n}{n!}(H_n-\log w)w^n,\end{eqnarray*}
where $B_n(w)$ are the usual Bernoulli polynomials and $H_n=\sum_{j=1}^n j^{-1}$ is the $n$-th harmonic number.
Hence we get the classical Stirling's formula
\begin{eqnarray*}\displaystyle\log\Gamma_1(w+a;1)&=&\log\Gamma(w+a)-\log\sqrt{2\pi}\\&=&\left(w+a-\frac{1}{2}\right)\log w-w+O\left(\frac{1}{w}\right).\end{eqnarray*}
\section{Proof of Theorem 1.1}
Our plan to prove the main theorem is smilar to the way to consider analytic continuations and the special values at negative integers of $\zeta_r(s,w;{\boldsymbol{\omega}})$. Therefore, we need the integral representation of the BM multiple gamma functions like that of the Barnes multiple gamma functions
\begin{eqnarray*}\displaystyle\log\Gamma_r(w,{\boldsymbol{\omega}})=\frac{1}{2\pi i}\int_{I(\lambda,\infty)} f_{\boldsymbol{\omega}}(t)e^{-wt}\frac{\log t}{t}\,dt+(\gamma-\pi i)\zeta_r(0,w,{\boldsymbol{\omega}}),\end{eqnarray*}
where $f_{\boldsymbol{\omega}}(t)=\prod_{j=1}^r (1-e^{-\omega_jt})^{-1}$, $0<\lambda<\min_{1\leq i\leq r}(|2\pi/\omega_{i}|)$ and $I(\lambda, \infty)$ is the path consisting of the infinite line from $\infty$ to $\lambda$, the circle of radius $\lambda$ around $0$ in the positive sense and the infinite line from $\lambda$ to $\infty$. This integral converges if $\mathrm{Re}(w)>0$.
\begin{Prop}
The following are true:\\
\begin{description}
\item[(1)]\mbox{}
\begin{align*}\displaystyle a_{r,n}(w;{\boldsymbol{\omega}})=\frac{1}{|{\boldsymbol{\omega}}|_{\times}}\sum_{n_0,\cdots,n_r\geq{0}\atop{n_0+\cdots+n_r=n+r}} \frac{w^{n_0}B_{n_1}(w)B_{n_2}(w)\cdot\cdots\cdot B_{n_r}(w)}{n_0!\cdot\cdots\cdot n_r!}\\\quad\cdot(|{\boldsymbol{\omega}}|-1)^{n_0}(-\omega_1)^{n_1}(-\omega_2)^{n_2}\cdot\cdots\cdot(-\omega_r)^{n_r}.\end{align*}
\item[(2)]\mbox{}
\begin{eqnarray*}\displaystyle\zeta_r(-n,w,{\boldsymbol{\omega}})=(-1)^nn!a_{r,n}(w;{\boldsymbol{\omega}}).\end{eqnarray*}
\item[(3)]\mbox{}
\begin{eqnarray*}\displaystyle\frac{\partial}{\partial w}a_{r,-n}(w;{\boldsymbol{\omega}})=-a_{r,n-1}(w;{\boldsymbol{\omega}}).\end{eqnarray*}
\item[(4)]\mbox{}
\begin{eqnarray*}\displaystyle a_{r+l,k}(w;({\boldsymbol{\omega}},{\boldsymbol{\alpha}}))=\sum_{N=-l}^{r+k} a_{r,k-N}(a;{\boldsymbol{\omega}})a_{l,N}(b;{\boldsymbol{\alpha}}).\end{eqnarray*} 
\end{description}
where $|{\boldsymbol{\omega}}|=\sum_{i=1}^r \omega_i$, $|{\boldsymbol{\omega}}|_{\times}=\prod_{i=1}^r \omega_i$, $a,b$ are arbitrary complex numbers that satisfy $a+b=w$, and $B_n(w)=(-1)^nn!a_{1,n-1}(w,1)$ is the usual Bernoulli polynomial.
\end{Prop}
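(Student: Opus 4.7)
My plan is to base all four parts on the defining identity
\begin{equation*}
F_{\boldsymbol{\omega}}(w;t) := \frac{e^{-wt}}{\prod_{j=1}^r (1-e^{-\omega_j t})} = \sum_{n\geq -r} a_{r,n}(w;\boldsymbol{\omega})\, t^n,
\end{equation*}
together with its two obvious structural properties: $F_{(\boldsymbol{\omega},\boldsymbol{\alpha})}(w;t) = F_{\boldsymbol{\omega}}(a;t)F_{\boldsymbol{\alpha}}(b;t)$ for any splitting $w=a+b$, and $\partial_w F_{\boldsymbol{\omega}}(w;t) = -tF_{\boldsymbol{\omega}}(w;t)$. Taking the coefficient of $t^k$ in the second of these gives (3), while the Cauchy product of the two series in the first gives (4); the summation range $-l\leq N\leq r+k$ in (4) is forced by the lowest powers of $t$ appearing in each factor.

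For (1), I would begin with the classical Bernoulli generating function $xe^{wx}/(e^x-1)=\sum_{n\geq 0} B_n(w)x^n/n!$ and substitute $x=-\omega_j t$ to obtain $\omega_j t\,e^{-w\omega_j t}/(1-e^{-\omega_j t}) = \sum_{n\geq 0} B_n(w)(-\omega_j)^n t^n/n!$. Multiplying these relations for $j=1,\ldots,r$ yields an expression for $e^{-w|\boldsymbol{\omega}|t}/\prod_j(1-e^{-\omega_j t})$. I then multiply through by the correction factor $e^{w(|\boldsymbol{\omega}|-1)t} = \sum_{n_0\geq 0} w^{n_0}(|\boldsymbol{\omega}|-1)^{n_0}t^{n_0}/n_0!$ to convert the left-hand side into $F_{\boldsymbol{\omega}}(w;t)$, consolidate the $r+1$ series into a single sum indexed by $(n_0,\ldots,n_r)$, and extract the coefficient of $t^n$; the constraint $n_0+\cdots+n_r=n+r$ then reproduces the identity in (1).

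For (2), I would use the standard Mellin-to-Hankel argument already hinted at in the paper. Term-by-term integration shows $\Gamma(s)\zeta_r(s,w;\boldsymbol{\omega}) = \int_0^\infty f_{\boldsymbol{\omega}}(t)e^{-wt}t^{s-1}\,dt$ for $\mathrm{Re}(s)>r$, and deforming the real half-line to $I(\lambda,\infty)$ produces a representation with $\Gamma(1-s)$ in front that is entire in $s$. At $s=-n$ the prefactor equals $n!$, the function $(-t)^{s-1}$ is single-valued, and the two half-lines of $I(\lambda,\infty)$ cancel; the integral therefore collapses to $2\pi i$ times the residue of $f_{\boldsymbol{\omega}}(t)e^{-wt}t^{-n-1}$ at the origin, which is exactly $a_{r,n}(w;\boldsymbol{\omega})$. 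The main place I expect to need genuine care is the sign bookkeeping here: the orientation of $I(\lambda,\infty)$, the branch of $(-t)^{s-1}$, and the value of $\Gamma(1-s)$ at $s=-n$ must combine precisely to yield $(-1)^n n!\,a_{r,n}(w;\boldsymbol{\omega})$. The other three parts are formal generating-function identities and should present no real obstacle.
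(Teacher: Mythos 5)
Your proofs of (1), (3) and (4) coincide with the paper's own: (1) is exactly the substitution $x=-\omega_j t$ into the Bernoulli generating function, multiplication by the correction factor $e^{w(|\boldsymbol{\omega}|-1)t}$, and extraction of the coefficient of $t^n$ under the constraint $n_0+\cdots+n_r=n+r$ (the prefactor $1/|\boldsymbol{\omega}|_{\times}$ and the shift by $r$ coming from $\prod_j(\omega_j t)^{-1}$); (3) and (4) are the same coefficient extractions from $\partial_w F=-tF$ and from the Cauchy product of the two generating functions, and you correctly identify the range $-l\leq N\leq r+k$ forced by $L=k-N\geq -r$, $M=N\geq -l$. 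The only genuine divergence is (2). The paper keeps the Mellin integral on $(0,\infty)$, splits $\Gamma(s)\zeta_r(s,w;\boldsymbol{\omega})$ as $I_1+I_2^n+I_3^n$ with $I_2^n$ the integral over $[0,1]$ of the truncated Laurent expansion $\sum_{k=-r}^{n}a_{r,k}(w;\boldsymbol{\omega})t^k$, notes that $I_1$ is entire and $I_3^n$ is holomorphic for $\mathrm{Re}(s)>-n-1$, and lets the zero of $1/\Gamma(s)$ at $s=-n$ annihilate everything except the simple pole of $I_2^n(s)=\sum_k a_{r,k}(w;\boldsymbol{\omega})/(s+k)$, using $\lim_{s\to -n}\frac{1}{\Gamma(s)(s+n)}=(-1)^n n!$. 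You instead pass to the Hankel contour $I(\lambda,\infty)$ with prefactor $\Gamma(1-s)$ and evaluate at $s=-n$ by a residue. Both routes are standard and correct; yours has the advantage of living on the same contour on which the paper defines $P_r$ and $\log\Gamma_r$, so it meshes well with the rest of the paper, but it does require the sign and branch bookkeeping you flag: with the usual convention the representation is $\zeta_r(s,w;\boldsymbol{\omega})=-\frac{\Gamma(1-s)}{2\pi i}\int_{I(\lambda,\infty)}f_{\boldsymbol{\omega}}(t)e^{-wt}(-t)^{s-1}\,dt$, and at $s=-n$ the branch value $(-t)^{-n-1}=(-1)^{n+1}t^{-n-1}$, the leading minus sign, and $\Gamma(1+n)=n!$ combine to give precisely $(-1)^n n!\,a_{r,n}(w;\boldsymbol{\omega})$; the paper's splitting avoids branches entirely at the cost of the three-term decomposition. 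One remark common to both arguments: what the identity $\partial_w F=-tF$ actually yields is $\frac{\partial}{\partial w}a_{r,n}(w;\boldsymbol{\omega})=-a_{r,n-1}(w;\boldsymbol{\omega})$ for all $n$, so the index $-n$ on the left-hand side of the stated (3) appears to be a typo that neither your derivation nor the paper's own proof reproduces literally.
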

\begin{proof}
\begin{description}
\item[(1)]\mbox{}
From a well-known representation
\begin{eqnarray*}\displaystyle\frac{te^{wt}}{e^t-1}=\sum_{n=0}^{\infty} \frac{B_n(w)}{n!}t^n,\end{eqnarray*}
it follows that
\begin{eqnarray*}\displaystyle &&f_{\boldsymbol{\omega}}(t)e^{-wt}\\&=&\frac{1}{1-e^{-\omega_1t}}\cdot\cdots\cdot\frac{1}{1-e^{-\omega_rt}}e^{-wt}\\&=&\frac{1}{|{\boldsymbol{\omega}}|_{\times}}\frac{-\omega_1te^{-w\omega_1t}}{e^{-\omega_1t}-1}\cdot\cdots\cdot\frac{-\omega_rte^{-w\omega_rt}}{e^{-\omega_rt}-1}t^{-r}e^{(|{\boldsymbol{\omega}}|-1)wt}\\&=&\frac{1}{|{\boldsymbol{\omega}}|_{\times}}\sum_{n_0,\cdots,n_r\geq{0}} \frac{w^{n_0}B_{n_1}(w)B_{n_2}(w)\cdot\cdots\cdot B_{n_r}(w)}{n_0!\cdot\cdots\cdot n_r!}\\&{}&\quad\cdot(|{\boldsymbol{\omega}}|-1)^{n_0}(-\omega_1)^{n_1}(-\omega_2)^{n_2}\cdot\cdots\cdot(-\omega_r)^{n_r}t^{n_0+\cdots+n_r}.\end{eqnarray*}
\item[(2)]\mbox{}
We partition the integral representation
\begin{eqnarray*}\displaystyle\zeta_r(s,w,{\boldsymbol{\omega}})=\frac{1}{\Gamma(s)}\int_0^{\infty} f_{\boldsymbol{\omega}}(t)e^{-wt}t^{s-1}\,dt\end{eqnarray*}
as follows:
\begin{eqnarray*}\displaystyle\Gamma(s)\zeta_r(s,w,{\boldsymbol{\omega}})&=&I_1(s)+I^n_2(s)+I^n_3(s)\\I_1(s)&=&\int_1^{\infty} f_{\boldsymbol{\omega}}(t)e^{-wt}t^{s-1}\,dt\\I^n_2(s)&=&\int_0^1 \left(\sum_{k=-r}^{n} a_{r,k}(w;{\boldsymbol{\omega}})t^k\right)t^{s-1}\,dt\\I^n_3(s)&=&\int_0^1 \left(f_{\boldsymbol{\omega}}(t)e^{-wt}-\sum_{k=-r}^{n} a_{r,k}(w;{\boldsymbol{\omega}})t^k\right)t^{s-1}\,dt.\end{eqnarray*}
Then $I_1$ is an entire function and $I_3^n$ is holomorphic if $\mathrm{Re}(s)>-n-1$, hence we have
\begin{eqnarray*}\displaystyle\zeta_r(-n,w;{\boldsymbol{\omega}})&=&\lim_{s\rightarrow{-n}}\frac{1}{\Gamma(s)}(I_1(s)+I^n_2(s)+I^n_3(s))\\&=&\lim_{s\rightarrow{-n}} \frac{1}{\Gamma(s)}\int_0^1 \left(\sum_{k=-r}^{n} a_{r,k}(w;{\boldsymbol{\omega}})t^k\right)t^{s-1}\,dt\\&=&\lim_{s\rightarrow{-n}}\frac{1}{\Gamma(s)}\sum_{k=-r}^n \frac{a_{r,k}(w;{\boldsymbol{\omega}})}{s+k}\\&=&(-1)^nn!a_{r,n}(w;{\boldsymbol{\omega}}).\end{eqnarray*}
\item[(3)]\mbox{}
The statement can be obtained immediately from the following: 
\begin{eqnarray*}\displaystyle\sum_{n=-r}^{\infty} \frac{\partial}{\partial w}a_{r,n}(w;{\boldsymbol{\omega}})t^n&=&\frac{\partial}{\partial w}e^{-wt}\prod_{i=1}^r (1-e^{-\omega_it})^{-1}\\&=&-te^{-wt}\prod_{i=1}^r (1-e^{-\omega_it})^{-1}\\&=&-\sum_{n=-r}^{\infty} a_{r,n}(w;{\boldsymbol{\omega}})t^{n+1}\\&=&-\sum_{n=1-r}^{\infty} a_{r,n-1}(w;{\boldsymbol{\omega}})t^n\end{eqnarray*}
\item[(4)]\mbox{}
We can get the statement from
\begin{eqnarray*}\displaystyle e^{-at}\prod_{i=1}^r (1-e^{-\omega_it})^{-1}&=&\sum_{L\geq{-r}} a_{r,L}(a;{\boldsymbol{\omega}})t^L\\e^{-bt}\prod_{j=1}^l(1-e^{-\alpha_jt})^{-1}&=&\sum_{M\geq{-l}} a_{l,M}(b;{\boldsymbol{\alpha}})t^M.\end{eqnarray*}
This follows from
\begin{eqnarray*}\displaystyle \sum_{N\geq{-r-l}} a_{r+l,N}(w;({\boldsymbol{\omega}},{\boldsymbol{\alpha}}))=\sum_{L\geq{-r},M\geq{-l}}a_{r,L}(a;{\boldsymbol{\omega}})a_{l,M}(b;{\boldsymbol{\alpha}})t^{L+M}\end{eqnarray*}
\end{description}
\end{proof}
\begin{Thm}\label{main}
The following are true:
\begin{description}
\item[(1)]\mbox{}
\begin{eqnarray*}\displaystyle P_r(k,w;{\boldsymbol{\omega}})&=&\frac{(-1)^kk!}{2\pi i}\int_{I(\lambda,\infty)} f_{\boldsymbol{\omega}}(t)e^{-wt}t^{-k-1}\log t\,dt\\&{}&+(\gamma-\pi i-H_k)a_{r,k}(w;{\boldsymbol{\omega}}).\end{eqnarray*}
\item[(2)]\mbox{}
\begin{eqnarray*}\displaystyle P_r(k,w;{\boldsymbol{\omega}})-P_r(k,w+\omega_i;{\boldsymbol{\omega}})=P_{r-1}(k,w;{\boldsymbol{\omega}}\langle{i}\rangle).\end{eqnarray*}
\item[(3)]\mbox{}
\begin{eqnarray*}\displaystyle P_{r+l}(k,w+a;({\boldsymbol{\omega}},{\boldsymbol{\alpha}}))=\sum_{N=-l}^{r+k} a_{l,N}(a;{\boldsymbol{\alpha}})P_{r}(k-N,w;{\boldsymbol{\omega}})+O\left(\frac{1}{w}\right).\end{eqnarray*}
\end{description}
where $H_k$ is the $k$-th harmonic number $\sum_{j=1}^k j^{-1}$, $\gamma$ is the Euler constant,\\and ${\boldsymbol{\omega}}\langle{i}\rangle=(\omega_1,\cdots,\omega_{i-1},\omega_{i+1},\cdots,\omega_r)$.
\end{Thm}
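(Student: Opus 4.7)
The plan is to handle the three parts in order, relying throughout on the Hankel-contour representation of $\zeta_r$ and on the identities for $a_{r,n}$ from Proposition 2.1. For part (1), I would start from $\Gamma(s)\zeta_r(s,w;\boldsymbol{\omega})=\int_0^{\infty} f_{\boldsymbol{\omega}}(t) e^{-wt} t^{s-1}\,dt$ and convert it to the Hankel identity
\[
\int_{I(\lambda,\infty)} f_{\boldsymbol{\omega}}(t) e^{-wt} t^{s-1}\,dt = (e^{2\pi i s}-1)\,\Gamma(s)\,\zeta_r(s,w;\boldsymbol{\omega}),
\]
which follows by splitting the Hankel path into the two sides of the branch cut. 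Differentiating in $s$ produces the $\log t$ factor on the left. At $s=-k$ the regular function $(e^{2\pi i s}-1)\Gamma(s)$ has Taylor expansion $\tfrac{2\pi i(-1)^k}{k!}\bigl\{1+[(H_k-\gamma)+\pi i](s+k)+O((s+k)^2)\bigr\}$; combining this with $\zeta_r(-k,w;\boldsymbol{\omega})=(-1)^k k!\,a_{r,k}(w;\boldsymbol{\omega})$ from Proposition 2.1(2) and solving for $\zeta_r'(-k,w;\boldsymbol{\omega})$ yields the claimed representation of $P_r(k,w;\boldsymbol{\omega})$.

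Part (2) is a direct manipulation. Subtracting the two integral representations and using $f_{\boldsymbol{\omega}}(t)(e^{-wt}-e^{-(w+\omega_i)t}) = f_{\boldsymbol{\omega}\langle i\rangle}(t)\,e^{-wt}$ collapses the integrand to the one defining $P_{r-1}(k,w;\boldsymbol{\omega}\langle i\rangle)$; the same identity applied to the Laurent expansions in $t$ gives $a_{r,k}(w;\boldsymbol{\omega})-a_{r,k}(w+\omega_i;\boldsymbol{\omega})=a_{r-1,k}(w;\boldsymbol{\omega}\langle i\rangle)$, which matches the constant term.

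Part (3) is where the argument has substance and where I expect the main obstacle. I would split $f_{\boldsymbol{\alpha}}(t) e^{-at} = \sum_{M=-l}^{r+k} a_{l,M}(a;\boldsymbol{\alpha})\,t^M + R(t)$ with $R(t) = O(t^{r+k+1})$ near $t=0$, and substitute this into the integral representation of $P_{r+l}(k,w+a;(\boldsymbol{\omega},\boldsymbol{\alpha}))$ after factoring $f_{(\boldsymbol{\omega},\boldsymbol{\alpha})}(t)e^{-(w+a)t} = f_{\boldsymbol{\omega}}(t)e^{-wt}\cdot f_{\boldsymbol{\alpha}}(t)e^{-at}$. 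The polynomial piece assembles into $\sum_{M=-l}^{r+k} a_{l,M}(a;\boldsymbol{\alpha})\,P_r(k-M,w;\boldsymbol{\omega})$, with the $(\gamma-\pi i)$ boundary constants on both sides reconciled via the convolution identity of Proposition 2.1(4). What remains is the error term
\[
\frac{1}{2\pi i}\int_{I(\lambda,\infty)} F(t)\,e^{-wt}\,\log t\,dt,\qquad F(t):=f_{\boldsymbol{\omega}}(t)R(t)t^{-k-1},
\]
whose integrand $F$ is now analytic at $t=0$ because $R$ kills the $t^{-r-k-1}$ singularity. Collapsing the Hankel keyhole inward, valid inside the smallest pole circle of $f_{\boldsymbol{\omega}}$, reduces this to the genuine Laplace integral $\int_0^{\infty} F(\rho)\,e^{-w\rho}\,d\rho$, and the $O(1/w)$ bound then follows by a Watson's-lemma-type analysis near the origin together with exponential damping on $[\delta,\infty)$ to absorb any polynomial growth of $F$ at infinity (valid for $\mathrm{Re}(w)>0$).
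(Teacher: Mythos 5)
Your proposal is correct and follows essentially the same route as the paper: part (1) via the Hankel representation $(e^{2\pi is}-1)\Gamma(s)\zeta_r(s,w;\boldsymbol{\omega})=\int_{I(\lambda,\infty)}f_{\boldsymbol{\omega}}(t)e^{-wt}t^{s-1}\,dt$ and the expansion of $(e^{2\pi is}-1)\Gamma(s)$ at $s=-k$ (whose logarithmic derivative is exactly the limit $H_k+\pi i-\gamma$ the paper quotes), part (2) via the telescoping identity $f_{\boldsymbol{\omega}}(t)(1-e^{-\omega_it})=f_{\boldsymbol{\omega}\langle i\rangle}(t)$ on both the integral and the coefficients, and part (3) via splitting $f_{\boldsymbol{\alpha}}(t)e^{-at}$ into its truncated Laurent polynomial plus an $O(t^{r+k+1})$ remainder, reconciling the $(\gamma-\pi i)$ terms with Proposition 2.1(4), collapsing the now-regular Hankel integral to $\int_0^\infty$, and estimating it as $O(1/w)$ by a Watson's-lemma argument. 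Your write-up is in fact more explicit than the paper's at the points where the paper is terse, and no step is missing.
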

\begin{proof}
\begin{description}
\item[(1)]\mbox{}
We can derive the statement from
\begin{eqnarray*}\displaystyle \lim_{s\rightarrow{-k}} \left(\frac{\Gamma'}{\Gamma}(s)+\frac{2\pi ie^{2\pi is}}{e^{2\pi is}-1}\right)=H_k+\pi i-\gamma.\end{eqnarray*}
\item[(2)]\mbox{}
\begin{eqnarray*}\displaystyle&&P_r(k,w;{\boldsymbol{\omega}})-P_r(k,w+\omega_i;{\boldsymbol{\omega}})\\&=&\frac{1}{2\pi i}\int_{I(\lambda,\infty)} f_{\boldsymbol{\omega}}(t)e^{-wt}(1-e^{-\omega_it})t^{-k-1}\log t\,dt\\&{}&\quad+(\gamma-\pi i)a_{r,k}(w;{\boldsymbol{\omega}})-(\gamma-\pi i)a_{r,k}(w+\omega_i;{\boldsymbol{\omega}})\\&=&P_{r-1}(k,w;{\boldsymbol{\omega}})\\&{}&\quad+(\gamma-\pi i)(a_{r,k}(w;{\boldsymbol{\omega}})-a_{r,k}(w+\omega_i;{\boldsymbol{\omega}})-a_{r-1,k}(w;{\boldsymbol{\omega}})).\end{eqnarray*}
Thus we only have to show
\begin{eqnarray*}\displaystyle a_{r,k}(w;{\boldsymbol{\omega}})-a_{r,k}(w+\omega_i;{\boldsymbol{\omega}})=a_{r-1,k}(w;{\boldsymbol{\omega}}),\end{eqnarray*}
which follows from
\begin{eqnarray*}\displaystyle&&\sum_{N\geq{-r}}(a_{r,N}(w;{\boldsymbol{\omega}})-a_{r,N}(w+\omega_i;{\boldsymbol{\omega}}))t^N\\&=&f_{\boldsymbol{\omega}\langle{i}\rangle}(t)e^{-wt}\\&=&\sum_{N\geq{1-r}} a_{r-1,N}(w;{\boldsymbol{\omega}}\langle{i}\rangle).\end{eqnarray*}
\item[(3)]\mbox{}
We can get easily
\begin{eqnarray*}\displaystyle&&P_{r+l}(k,w+a;({\boldsymbol{\omega}},{\boldsymbol{\alpha}}))=(\gamma-\pi i)a_{r+l,k}(w+a;({\boldsymbol{\omega}},{\boldsymbol{\alpha}}))\\&{}&+\sum_{N=-l}^{r+k} a_{l,N}(a;{\boldsymbol{\alpha}})(P_l(k-N,w;{\boldsymbol{\omega}})-(\gamma-\pi i)a_{l,k-N}(w;{\boldsymbol{\omega}}))\\&{}&+\frac{(-1)^kk!}{2\pi i}\int_{I(\lambda,\infty)} f_{\boldsymbol{\omega}}(t)e^{-wt}\biggl(f_{\boldsymbol{\alpha}}(t)e^{-at}\biggr.\\&{}&\biggl.-\sum_{N=-l}^{r+k} a_{l,N}(a;{\boldsymbol{\alpha}})t^N\biggr)t^{-k-1}\log t\,dt.\end{eqnarray*}
Since
\begin{equation*}\displaystyle f_{\boldsymbol{\alpha}}(t)e^{-at}-\sum_{N=-l}^{r+k} a_{l,N}(a;{\boldsymbol{\alpha}})t^N=O(t^{r+k+1})\qquad{(t\rightarrow{0})},\end{equation*}
the third term is an entire function and has the root at $t=0$. Hence, it follows by using (4) of Proposition 2.1:
\begin{eqnarray*}\displaystyle&&P_{r+l}(k,w+a;({\boldsymbol{\omega}},{\boldsymbol{\alpha}}))\\&=&\sum_{N=-l}^{r+k} a_{l,N}(a;{\boldsymbol{\alpha}})P_{r}(k-N,w;{\boldsymbol{\omega}})\\&+&\int_0^{\infty} f_{\boldsymbol{\omega}}(t)e^{-wt}\left(f_{\boldsymbol{\alpha}}(t)e^{-at}-\sum_{N=-l}^{r+k} a_{l,N}(a;{\boldsymbol{\alpha}})t^N\right)t^{-k-1}\,dt.\end{eqnarray*}
The proposition from the series expansion of the second term.
\end{description}
\end{proof}

\end{document}